\documentclass[12pt,a4paper]{amsart}

\usepackage{amsfonts, amsmath, amssymb, amsthm, amscd, hyperref}

\newtheorem{thm}{Theorem}
\newtheorem*{thm*}{Theorem}
\newtheorem{lem}{Lemma}

\newtheorem{cor}[thm]{Corollary}

\theoremstyle{definition}
\newtheorem{defn}{Definition}

\theoremstyle{remark}

\DeclareMathOperator{\cat}{cat} 
\DeclareMathOperator{\secat}{secat}
\DeclareMathOperator{\hl}{hlen}

\DeclareMathOperator{\cl}{cl}

\renewcommand{\epsilon}{\varepsilon}

\begin{document}

\title{The genus and the Lyusternik-Schnirelmann category of preimages}

\author{R.N.~Karasev}
\thanks{This research is supported by the Dynasty Foundation, the President's of Russian Federation grant MK-113.2010.1, the Russian Foundation for Basic Research grants 10-01-00096 and 10-01-00139}

\email{r\_n\_karasev@mail.ru}
\address{
Roman Karasev, Dept. of Mathematics, Moscow Institute of Physics
and Technology, Institutskiy per. 9, Dolgoprudny, Russia 141700}

\keywords{Lyusternik-Schnirelmann category, Schwarz genus}

\subjclass[2000]{55M30, 55M35}

\begin{abstract}
In this paper some axiomatic generalization (function of open subsets) of the relative Lyusternik-Schnirelmann category is considered, incorporating the sectional category and the Schwarz genus as well. For this function and a given continuous map of the underlying space to a finite-dimensional metric space some lower bounds on the value of this function on the (neighborhood of) preimage of some point are given.
\end{abstract}

\maketitle

\section{Introduction}

Let us introduce a notion that generalizes the relative Lyusternik-Schnirelmann category.

\begin{defn}
Let $X$ be a topological space. Let the function $\kappa$ takes the set of nonempty open subsets of $X$ to positive integers and has the following properties:

1) if $U\subseteq V$, then $\kappa(U) \le \kappa(V)$ (monotonicity);

2) $\kappa(U_1\cup\dots\cup U_n) \le \kappa(U_1)+\dots +\kappa(U_n)$ (subadditivity);

3) $\kappa(U_1\cup\dots\cup U_n) \le \max\{\kappa(U_1),\ldots,\kappa(U_n)\}$, if the sets $\cl U_1,\ldots, \cl U_n$ are pairwise disjoint.

We call such function $\kappa$ a \emph{generalized relative category}.
\end{defn}

The relative Lyusternik-Schnirelmann category $\cat_X U$ \cite{ls1934} is an example of a generalized relative category (when $X$ is arcwise connected to satisfy Property~3). 

Another example is the \emph{sectional category} of a map $f: Y\to X$, the definition from~\cite{schw1966} (see also~\cite{jam1978}) is as follows.

\begin{defn}
Let $f:Y\to X$ be a continuous surjective map. For an open subset $U\subseteq X$ we put 
$$
\secat_f U = 1
$$
if $f|_{f^{-1}(U)}$ has a section $U\to Y$. We put 
$$
\secat_f U = k
$$
if $k$ is the smallest size of an open cover $V_1\cup\dots\cup V_k\supseteq U$ by open sets with $\secat V_i=1$.
\end{defn}

The function $\secat_f U$ is a generalized relative category, Properties~1 and 2 are obvious, Property~3 holds because if $V_1,\ldots, V_n$ are open and pairwise disjoint with $\secat V_i=1$, then 
$$
\secat V_1\cup\dots\cup V_n=1,
$$
since the section of $f$ is defined on every $V_i$ separately.

A particular case of the sectional category is the Schwarz genus, introduced for $Z_2$-action by Krasnosel'skii, see~\cite{kras1955,schw1966}.

\begin{defn}
Let $G$ be a finite group, $Y$ be a free $G$-space. Define the \emph{Schwarz genus} $g_G(Y)$ as the sectional category of the natural projection $Y\to Y/G$.
\end{defn}

If the space $Y$ is fixed, and $\pi : Y\to Y/G=X$ is the natural projection, then $g_G(\pi^{-1}(U))$ is a generalized relative category on $X$. For paracompact $Y$ and its $G$-invariant open subset $U$ the property $g_G(U)\le n$ is equivalent to existence of a $G$-equivariant map $U\to \underbrace{G*G*\cdots*G}_{n} = G^{*n}$. In this case the genus also makes sense for closed invariant subsets. See Section~\ref{genus-sec} for further discussion of the Schwarz genus.

Let us define the cohomology length for a ring of coefficients $A$.

\begin{defn}
Let $X$ be a topological space, $\iota: U\to X$ the inclusion of an open subset. Define \emph{the relative cohomology length} of $U$ by
\begin{multline*}
\hl_X U = \max\{k : \exists u_1,\ldots, u_k\in H^*(X, A),\ \forall i\ \dim u_i >0,\\ \iota^*(u_1)\iota^*(u_2)\dots\iota^*(u_k)\not=0 \}.
\end{multline*}
\end{defn}

In Section~\ref{length-sec} we discuss the cohomology length and show that $\hl_X U +1$ is a generalized relative category. The subadditivity of length also implies the well-known bound $\cat_X U \ge \hl_X U + 1$. In fact, if a generalized relative category $\kappa$ takes contractible in $X$ subsets of $X$ to $1$, then $\kappa(U)\le \cat_X U$.

There are other examples of a generalized relative category for arcwise connected $X$. For any continuous map $f:X\to Y$ in~\cite{berga1961} the (restriction) category of a map is defined by the rule $\cat_f (U)=1$ iff $f|_U$ is null-homotopic. For any (generalized) cohomology class $\xi$ in~\cite{berga1961} the restriction category is defined by the rule $\cat_\xi U=1$ iff $\xi|_U=0$.

See the review~\cite{jam1978} for different generalizations of the Lyusternik-Schnirelmann category. Note that the \emph{strong category} of $U$, which requires the contractibility of $U$ in $U$ (not in $X$), is not a generalized relative category, the Property~3 obviously fails.

Now let us state the main result.

\begin{thm}
\label{fun-level} Let $\kappa$ be a generalized relative category on $X$ with $\kappa(X) > n(d+1)$. For any continuous map $f: X\to Y$ to a compact metric space of covering dimension $d$ there exists a point $c\in Y$ such that for any neighborhood $U\ni c$
$$
\kappa( f^{-1}(U)) > n.
$$
\end{thm}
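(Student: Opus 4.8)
The plan is to argue by contradiction. Suppose that for every $c\in Y$ there is an open neighborhood $U_c\ni c$ with $\kappa(f^{-1}(U_c))\le n$. Since $Y$ is compact, pass to a finite subcover $U_{c_1},\dots,U_{c_m}$. Because $Y$ has covering dimension $d$, this finite open cover admits a finite open refinement $W_1,\dots,W_N$ of order at most $d+1$, i.e. every point of $Y$ lies in at most $d+1$ of the $W_j$. Each $W_j$ lies in some $U_{c_i}$, so by monotonicity $\kappa(f^{-1}(W_j))\le\kappa(f^{-1}(U_{c_i}))\le n$. Choose a partition of unity $\{\phi_j\}$ subordinate to $\{W_j\}$ (a compact metric space is normal and paracompact), let $N$ be the nerve of $\{W_j\}$ — a finite simplicial complex of dimension $\le d$, since the cover has order $\le d+1$ — and let $\Phi\colon Y\to|N|$, $\Phi(y)=\sum_j\phi_j(y)e_j$, be the canonical map. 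Then $\Phi^{-1}(\operatorname{St}_N(e_j))\subseteq W_j$ for every vertex $e_j$, where $\operatorname{St}$ denotes the open star.

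The combinatorial core is the following decomposition of $|N|$. Pass to the barycentric subdivision $N'$ and colour each vertex $b_\sigma$ of $N'$ by $\dim\sigma\in\{0,1,\dots,d\}$. A simplex of $N'$ is a chain of simplices of $N$ of strictly increasing dimensions, so two vertices of $N'$ of the same colour never span an edge; hence, for a fixed colour $k$, the open stars $\operatorname{St}_{N'}(b_\sigma)$ with $\dim\sigma=k$ are pairwise disjoint. Moreover $\operatorname{St}_{N'}(b_\sigma)\subseteq\operatorname{St}_N(v)$ for every vertex $v$ of $\sigma$, and the open stars of all vertices of $N'$ cover $|N'|=|N|$. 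Shrinking slightly — for instance, replace $\operatorname{St}_{N'}(b_\sigma)$ by $P_\sigma=\{x:g_{b_\sigma}(x)>\tfrac12\max_w g_w(x)\}$, where $g_w$ is the barycentric coordinate function at the vertex $w$ of $N'$ — produces open sets $P_\sigma$ that still cover $|N|$, satisfy $P_\sigma\subseteq\operatorname{St}_{N'}(b_\sigma)\subseteq\operatorname{St}_N(v)$ for $v$ a vertex of $\sigma$, and whose \emph{closures} $\cl P_\sigma$ remain pairwise disjoint within each colour $k$, because $\cl P_\sigma\subseteq\operatorname{St}_{N'}(b_\sigma)$ and those open stars are disjoint. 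Put $O_k=\bigcup_{\dim\sigma=k}P_\sigma$, so that $|N|=O_0\cup\dots\cup O_d$ and within each colour the pieces have disjoint closures. I expect this lemma — specifically, arranging the \emph{closures} of the pieces of each colour class to be disjoint, as Property~3 demands — to be the only delicate point: a crude grouping governed merely by the order of the cover does not suffice, and the barycentric colouring together with the shrinking step is exactly what repairs this.

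Finally, compose with $f$. For each $k$, the sets $(\Phi f)^{-1}(P_\sigma)$ with $\dim\sigma=k$ are open in $X$; their closures lie in $(\Phi f)^{-1}(\cl P_\sigma)$ and hence are pairwise disjoint, and each lies in $(\Phi f)^{-1}(\operatorname{St}_N(e_j))\subseteq f^{-1}(W_j)$ for any vertex $e_j$ of $\sigma$, so its $\kappa$-value is at most $n$ by monotonicity. Applying Property~3 to the finitely many nonempty such sets gives $\kappa\bigl((\Phi f)^{-1}(O_k)\bigr)\le n$ for every $k=0,\dots,d$. Since $\bigcup_{k=0}^d(\Phi f)^{-1}(O_k)=(\Phi f)^{-1}(|N|)=X$, subadditivity (Property~2) yields
$$
\kappa(X)\le\sum_{k=0}^d\kappa\bigl((\Phi f)^{-1}(O_k)\bigr)\le(d+1)n,
$$
contradicting $\kappa(X)>n(d+1)$; hence the required point $c$ exists. (If some $(\Phi f)^{-1}(O_k)$ or $(\Phi f)^{-1}(P_\sigma)$ happens to be empty, it is simply dropped from the relevant union, which only strengthens the estimates.)
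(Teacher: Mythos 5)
Your argument is correct, and it reaches the same combinatorial heart as the paper --- a decomposition of a fine cover into $d+1$ colour classes, each consisting of pieces with pairwise disjoint closures, to which Property~3 is applied within classes and Property~2 across classes --- but it gets there and back by a genuinely different route. First, where you prove the colouring statement from scratch (nerve of a cover of order $\le d+1$, barycentric subdivision coloured by dimension, and the $\tfrac12\max$ shrinking to turn disjoint open stars into pieces with disjoint \emph{closures}), the paper simply invokes a lemma of Palais: every open cover of a paracompact space of covering dimension $d$ refines to $\mathcal U_1\cup\dots\cup\mathcal U_{d+1}$ with each $\mathcal U_i$ consisting of sets with pairwise disjoint closures. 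You correctly identified the delicate point --- closures, not just the sets, must be disjoint --- and your $P_\sigma$ construction handles it cleanly; this makes your proof self-contained at the cost of length. Second, the global logic is dual: the paper argues directly, running the colouring lemma on covers of mesh $\varepsilon\to 0$ to produce, for each $\varepsilon$, a single set $V$ of diameter $\le\varepsilon$ with $\kappa(f^{-1}(V))>n$, and then uses compactness of $Y$ at the end to extract a limit point $c$; you instead argue by contradiction, using compactness at the outset to pass to a finite subcover of the hypothetical ``good'' neighbourhoods and deriving $\kappa(X)\le n(d+1)$. Your version avoids the (slightly informally stated) limiting argument ``the sets $V$ tend to a point $c$'' entirely, which is arguably cleaner; the paper's version localizes the use of compactness and makes visible that compactness is only needed to pin down the point $c$. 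The only blemishes in your write-up are cosmetic: the letter $N$ is used both for the number of sets $W_j$ and for the nerve, and one should note (as you implicitly do) that Properties~1--3 are stated for nonempty open sets, so empty preimages are discarded.
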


This statement also holds if $Y$ is not compact, but $X$ is compact, in this case the image $f(X)$ is compact. 

If $\kappa$ can be defined for closed sets and homotopy invariant, and the preimage $f^{-1}(c)$ is a retract of its neightborhood in $X$, then the theorem claims that $\kappa(f^{-1}(c)) > n$. This is true for analytic maps from a real analytic variety $X$. 

If we consider the sectional category of a fiber bundle with ANR fibers (e.g. the Schwarz genus), then for closed $F\subseteq X$ for some neighborhood $U\supset F$ we have $\kappa(F)\ge \kappa(U)$. If, in addition, $X$ is compact, then the theorem claims that $\kappa(f^{-1}(c)) > n$.

Note that a particular case of Theorem~\ref{fun-level} for the $Z_2$-genus is proved and used in~\cite[Lemma~3.1]{yang1955}. A close result on a certain homological analogue of the genus is proved in~\cite[Lemma~9.1 and Corollary~9.1]{vol2005}. Actually, the idea of the proof for the cohomology length is contained in~\cite{vol2005}.

Another result in this direction if obtained in~\cite{vara1965} for fiber bundles, and the number $d+1$ is replaced by the category of the map. Let us state that result.

\begin{thm}
Let $F\xrightarrow{\iota} E \xrightarrow{p} B$ be a fiber bundle with connected $B$ and $F$. Then
$$
\cat E\le \cat \iota \cdot \cat p.
$$
\end{thm}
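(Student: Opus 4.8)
The plan is to produce a categorical open cover of $E$ of cardinality $\cat\iota\cdot\cat p$ by combining a categorical cover realizing $\cat p$ with one realizing $\cat\iota$, the link between the two being the homotopy lifting property of the bundle. Fix a point $b\in B$; since $B$ is connected, all fibre inclusions $p^{-1}(b')\hookrightarrow E$ are homotopic (through a homotopy induced by parallel transport along a path), so we may as well take $\iota$ to be the inclusion of $F:=p^{-1}(b)$.

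First I would unwind $\cat p=:m$: choose open sets $W_1,\dots,W_m$ covering $E$ with each $p|_{W_j}\colon W_j\to B$ null-homotopic, and — using connectedness of $B$ to slide the target to $b$ — pick homotopies $H^{(j)}\colon W_j\times I\to B$ from $p|_{W_j}$ to the constant map at $b$. Lifting $H^{(j)}$ through $p$, starting from the inclusion $W_j\hookrightarrow E$, yields $\widetilde H^{(j)}\colon W_j\times I\to E$ whose final stage $g_j:=\widetilde H^{(j)}_1$ lands in $F$ (because $p\circ g_j\equiv b$); thus the inclusion $W_j\hookrightarrow E$ is homotopic in $E$ to $\iota\circ g_j$. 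Next I would unwind $\cat\iota=:\ell$: choose open sets $A_1,\dots,A_\ell$ covering $F$ with each $\iota|_{A_i}$ null-homotopic, and set $U_{ij}:=g_j^{-1}(A_i)$. The $U_{ij}$ are open in $E$, there are $\ell m$ of them, they cover $E$ (every point lies in some $W_j$, and $g_j$ sends it into $F=\bigcup_i A_i$), and restricting $\widetilde H^{(j)}$ to $U_{ij}$ exhibits the inclusion $U_{ij}\hookrightarrow E$ as homotopic to $\iota|_{A_i}\circ(g_j|_{U_{ij}})$, hence null-homotopic. So $\{U_{ij}\}$ is a categorical cover of $E$ and $\cat E\le\ell m=\cat\iota\cdot\cat p$.

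The step that needs genuine care — and the only place where the topology of the spaces really matters — is the homotopy lifting used in the second paragraph: a locally trivial bundle is a Hurewicz fibration only under hypotheses on $B$ (paracompactness is enough), and even then one is lifting homotopies parametrized by the open sets $W_j$, which themselves need not be paracompact. Under the standing niceness assumptions of the cited paper (metric, or ANR, spaces) this is automatic; in general one argues locally over a numerable trivializing cover of $B$ and patches the lifts, which is the real content of the proof. The connectedness of $B$ is used essentially (to deform every $W_j$ into a single fibre), whereas connectedness of $F$ serves only to keep $\cat\iota$ an unambiguous invariant of the bundle.
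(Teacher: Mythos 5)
Your argument is correct, but note that there is nothing in the paper to compare it with: the statement is quoted from Varadarajan \cite{vara1965} purely as background, and no proof is given here. Your proof is essentially the classical one — lift the null-homotopy of $p|_{W_j}$ to compress $W_j$ into the fibre, then intersect with the preimages under the compression of a categorical (for $\iota$) cover of $F$ — and all the steps check out: $g_j:=\widetilde H^{(j)}_1$ lands in $F$, the sets $U_{ij}=g_j^{-1}(A_i)$ are open, cover $E$, and are contractible in $E$ since the inclusion of $U_{ij}$ is homotopic to $\iota|_{A_i}\circ(g_j|_{U_{ij}})$, a composite with a null-homotopic map; this gives $\cat E\le \cat\iota\cdot\cat p$ in the ``number of sets'' normalization used throughout the paper. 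The two caveats you flag are the right ones, with one simplification: once the bundle is known to be a Hurewicz fibration (which holds when it admits a numerable trivializing cover, e.g.\ for paracompact $B$), the homotopy lifting property holds with respect to arbitrary parameter spaces, so the possible non-paracompactness of the $W_j$ causes no difficulty; and ``connected'' must indeed be read as path-connected (or supplemented by local path-connectedness) both to slide the constant maps to the fixed basepoint $b$ and to make $\cat\iota$ independent of the chosen fibre.
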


\section{Proof of the main theorem}

We need a lemma~\cite[Lemma~2.4]{pal1966A}.

\begin{lem}
\label{color-cover} Let $X$ be paracompact space of covering dimension $d$. Then any open covering $\mathcal V$ of $X$ has a refinement
$$
\mathcal U = \bigcup_{i=1}^{d+1} \mathcal U_i,
$$
where each $\mathcal U_i$ consists of the sets with pairwise disjoint closures.
\end{lem}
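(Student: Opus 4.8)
The plan is to realise the refinement through a subordinate partition of unity and to build the $d+1$ families as ``super-level sets'' of that partition, separated by a fixed positive gap. First I would reduce to a nicer cover: since $X$ is paracompact of covering dimension $d$, standard dimension theory provides a locally finite open refinement $\mathcal W=\{W_\alpha\}_{\alpha\in A}$ of $\mathcal V$ of order at most $d+1$, i.e. no point lies in more than $d+1$ members of $\mathcal W$. As $X$ is then normal, fix a partition of unity $\{\phi_\alpha\}_{\alpha\in A}$ subordinate to $\mathcal W$, so each $\phi_\alpha\ge 0$ is continuous with $\overline{\{\phi_\alpha>0\}}\subseteq W_\alpha$, the supports are locally finite, and $\sum_\alpha\phi_\alpha\equiv 1$; by local finiteness each map $x\mapsto\max_{\beta\in A\setminus S}\phi_\beta(x)$ is continuous (reading the maximum as $0$ when all listed values vanish). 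Fix $\epsilon$ with $0<\epsilon<(d+1)^{-2}$ and, for every nonempty finite $S\subseteq A$, set
$$
U_S^\epsilon=\Bigl\{x\in X:\ \min_{\alpha\in S}\phi_\alpha(x)\ >\ \epsilon+\max_{\beta\in A\setminus S}\phi_\beta(x)\Bigr\},
$$
an open subset of $X$. I would then put $\mathcal U_i=\{\,U_S^\epsilon:\ |S|=i,\ U_S^\epsilon\neq\varnothing\,\}$ for $i=1,\dots,d+1$ and take $\mathcal U=\bigcup_{i=1}^{d+1}\mathcal U_i$. On $U_S^\epsilon$ each $\phi_\alpha$ with $\alpha\in S$ is positive, so $U_S^\epsilon\subseteq W_\alpha$ for $\alpha\in S$; hence $\mathcal U$ refines $\mathcal W$, and so $\mathcal V$. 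Moreover $U_S^\epsilon=\varnothing$ as soon as $|S|>d+1$, since $\min_{\alpha\in S}\phi_\alpha>0$ would force more than $d+1$ of the $\phi_\alpha$ to be positive at a point, contradicting the order of $\mathcal W$; thus only the indices $i\le d+1$ occur.

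The step I expect to be the crux is the pairwise-disjoint-closures property inside each $\mathcal U_i$: this is exactly where the naive choice (the open stars of the barycenters of the nerve of $\mathcal W$) fails, as those stars are pairwise disjoint but have touching closures at the ``tie'' configurations. The positive gap $\epsilon$ is designed to repair this. Writing $g_S=\min_{\alpha\in S}\phi_\alpha-\epsilon-\max_{\beta\in A\setminus S}\phi_\beta$, one has $U_S^\epsilon=\{g_S>0\}$ and hence $\overline{U_S^\epsilon}\subseteq\{g_S\ge 0\}$. Suppose $|S|=|S'|=i$, $S\neq S'$, and $x\in\overline{U_S^\epsilon}\cap\overline{U_{S'}^\epsilon}$. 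Choosing $\alpha^*\in S\setminus S'$ and $\beta^*\in S'\setminus S$ (which exist as $S,S'$ are distinct of equal size), the inequalities $g_S(x)\ge 0$ and $g_{S'}(x)\ge 0$ give
$$
\phi_{\alpha^*}(x)\ \ge\ \epsilon+\phi_{\beta^*}(x)\qquad\text{and}\qquad \phi_{\beta^*}(x)\ \ge\ \epsilon+\phi_{\alpha^*}(x),
$$
using $\beta^*\notin S$ and $\alpha^*\notin S'$. Adding them yields $0\ge 2\epsilon$, a contradiction, so the closures are pairwise disjoint within each $\mathcal U_i$, which is the required property.

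It remains to verify that $\mathcal U$ covers $X$, and here the size of $\epsilon$ is what matters. Fix $x$ and list the distinct positive values of $(\phi_\alpha(x))_\alpha$ as $w_1>\dots>w_r$; since at most $d+1$ values are positive, $r\le d+1$. For the super-level set $\sigma_j=\{\alpha:\phi_\alpha(x)\ge w_j\}$ one checks $x\in U_{\sigma_j}^\epsilon$ precisely when $w_j-w_{j+1}>\epsilon$ (with $w_{r+1}=0$). The consecutive differences $w_j-w_{j+1}$ are nonnegative and sum to $w_1$, while $w_1\ge(d+1)^{-1}$ because the positive values sum to $1$ and there are at most $d+1$ of them; as there are $r\le d+1$ differences, the largest is at least $w_1/r\ge(d+1)^{-2}>\epsilon$. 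Hence some $\sigma_j$ satisfies $x\in U_{\sigma_j}^\epsilon$, so $\mathcal U$ covers $X$. Together with the refinement and disjoint-closures properties this finishes the proof; the only genuinely delicate point is balancing $\epsilon$ so as to separate the closures while keeping the families covering, which the bound $\epsilon<(d+1)^{-2}$ achieves.
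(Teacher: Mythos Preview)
The paper does not actually prove this lemma; it merely quotes it as \cite[Lemma~2.4]{pal1966A} and uses it as a black box in the proof of Theorem~\ref{fun-level}. So there is nothing in the paper to compare your argument against.

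That said, your proof is correct and is in fact essentially Palais's own construction: pass to a locally finite refinement $\mathcal W$ of order $\le d+1$, take a subordinate partition of unity $\{\phi_\alpha\}$, and for each finite $S\subseteq A$ carve out the open set where the values $\phi_\alpha(x)$ with $\alpha\in S$ strictly dominate all others by a fixed margin $\epsilon$. The two places where something could go wrong are handled cleanly: the $\epsilon$-gap turns the naive ``open star'' sets (which are only pairwise disjoint, with touching closures at tie configurations) into sets whose closures are genuinely separated, via the $0\ge 2\epsilon$ contradiction; and the pigeonhole on the telescoping differences $w_j-w_{j+1}$, together with $w_1\ge (d+1)^{-1}$ and $r\le d+1$, shows that some gap exceeds $(d+1)^{-2}>\epsilon$, so the families still cover. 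One small remark: you might add a word to the effect that $\overline{U_S^\epsilon}\subseteq W_\alpha$ as well (since already $\{g_S\ge 0\}\subseteq\{\phi_\alpha\ge\epsilon\}\subseteq\{\phi_\alpha>0\}\subseteq W_\alpha$), which is sometimes wanted in applications, but this is not required for the lemma as stated.
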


\begin{proof}[Proof of Theorem~\ref{fun-level}]
The space $Y$ is metric and therefore paracompact. By Lemma~\ref{color-cover} we can cover $Y$ by a family $\mathcal U = \mathcal U_1\cup\dots\cup \mathcal U_{d+1}$ of open sets with diameters at most $\varepsilon$. Put $V_i = \bigcup \mathcal U_i$.

By Property~2 of a generalized relative category for some $i$ we have $\kappa(f^{-1}(V_i)) > n$. By Property~3 there exists a set $V\in\mathcal U_i$, such that $\kappa(f^{-1}(V)) > n$.

If $\varepsilon$ tends to zero, we find such sets $V\subset Y$ with diameters tending to zero. By the compactness considerations, the sets $V$ tend to a point $c\in Y$. Let us show that $c$ is the required point. If there is a neighborhood $U\ni c$ such that $\kappa(f^{-1}(U)) \le n$, then for a fine enough covering we have $V\subseteq U$, which contradicts Property~1 and the fact that $\kappa(f^{-1}(V)) > n$.
\end{proof}

\section{Properties of the Schwarz genus}
\label{genus-sec}

Remind the porperties of the genus from~\cite{kras1955,yang1955II,schw1966}. A review on the genus and its generalizations for non-free actions can be found in~\cite{vol2005}. We assume the spaces to be paracompact.

\begin{lem}[Monotonicity]
If there exists a $G$-equivariant map $X\to Y$ between free $G$-spaces, then
$$
g_G(X) \le g_G(Y).
$$
\end{lem}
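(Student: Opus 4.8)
The plan is to route the argument through the equivariant join description of the genus recorded in the introduction: for a paracompact free $G$-space $Z$ one has $g_G(Z)\le m$ if and only if there is a $G$-equivariant map $Z\to G^{*m}$. Since every space in this section is assumed paracompact, this equivalence applies to both $X$ and $Y$, and the whole lemma becomes a formal consequence of the fact that equivariant maps compose.

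Concretely, I would proceed as follows. We may assume $g_G(Y)=n$ is finite, since otherwise the inequality $g_G(X)\le g_G(Y)$ is automatic. By the join description applied to $Y$, there is a $G$-equivariant map $\psi\colon Y\to G^{*n}$. Let $\phi\colon X\to Y$ be the given $G$-equivariant map. Then $\psi\circ\phi\colon X\to G^{*n}$ is again $G$-equivariant, and applying the join description to $X$ in the opposite direction yields $g_G(X)\le n=g_G(Y)$, as required.

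I do not expect a genuine obstacle here; the only point that should not be glossed over is that the join description presupposes paracompactness, which is exactly why that hypothesis has been built into the standing assumptions of this section. For completeness I would also note the more hands‑on alternative that avoids the join: choose a cover $Y/G=V_1\cup\dots\cup V_n$ realizing $\secat$, with local sections $s_i\colon V_i\to Y$ of the orbit map $q_Y\colon Y\to Y/G$; let $\bar\phi\colon X/G\to Y/G$ be induced by $\phi$ and set $U_i=\bar\phi^{-1}(V_i)$, so $X/G=U_1\cup\dots\cup U_n$; and for a $G$-orbit $\bar x\in U_i$ let $t_i(\bar x)$ be the unique point of that orbit whose $\phi$-image equals $s_i(\bar\phi(\bar x))$, uniqueness holding because $\phi$ carries each orbit of $X$ bijectively onto an orbit of $Y$. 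In this route the single nontrivial step is the continuity of each $t_i$; it follows from the fact that the orbit map $q_X\colon X\to X/G$ of a free finite-group action on a paracompact Hausdorff space is a covering map, whence near any point of $U_i$ one can pick evenly covered neighbourhoods for both $q_X$ and $q_Y$ and see that $t_i$ agrees locally with $g^{-1}$ applied to the chosen sheet of $q_X$ for a suitable locally constant $g\in G$. I would nonetheless present the first route, since it is shorter and uses only machinery already introduced.
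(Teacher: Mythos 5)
Your first argument is correct and is the standard one: the paper itself states this lemma without proof (it is recalled from Krasnosel'skii, Yang and Schwarz), and the composition argument via the join characterization $g_G(Z)\le m \iff \exists$ a $G$-equivariant map $Z\to G^{*m}$, valid under the section's standing paracompactness assumption, is exactly the intended justification. Your hands-on alternative via lifted local sections is also sound (the orbit bijectivity and covering-map facts you invoke do hold for free actions of a finite group on a Hausdorff space), but it is unnecessary given the first route.
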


The following property obviously follows from $k-2$-connectivity of $G^{*k}$ and the obstruction theory.

\begin{lem}
\label{ind-dim}
For a paracompact $G$-space $X$ we have
$$
g_G(X)\le\dim X+1.
$$
\end{lem}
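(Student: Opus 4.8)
The plan is to exhibit a $G$-equivariant map $X \to G^{*(n+1)}$ whenever $n = \dim X + 1$, i.e.\ to produce an equivariant map into the $(n-1)$-fold join when $\dim X \le n-1$; by the characterization of the genus quoted in the introduction ($g_G(U) \le n$ iff there is a $G$-equivariant map $U \to G^{*n}$), this gives $g_G(X) \le \dim X + 1$. The target $G^{*k}$ is a free $G$-complex which is $(k-2)$-connected (it is the $(k-1)$-skeleton of $EG$ in the standard bar-type model, or one checks directly that a $k$-fold join of nonempty discrete sets is $(k-2)$-connected). So the assertion is a pure obstruction-theory statement: an equivariant map from a free $G$-space $X$ into an $(k-2)$-connected free $G$-CW complex exists as soon as $\dim X \le k-1$.

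First I would reduce to the CW case: since $X$ is paracompact, an equivariant map $X \to EG$ exists and is unique up to equivariant homotopy (this is the universal property of $EG$; alternatively, any free $G$-space over a paracompact base admits a classifying map to the Milnor join model of $EG$). So it suffices to work with the classifying map $X \to EG$ and compress it into the subcomplex $E_k G := G^{*k} \subseteq EG$. Second, I would run the standard equivariant obstruction theory for this compression, working in the quotient: a $G$-map $X \to E_k G$ is the same as a section of the associated bundle with fiber $E_k G$ over $X/G$ (or, more directly, one compresses skeleton by skeleton). The obstruction to extending an equivariant map defined over the $(j-1)$-skeleton to the $j$-skeleton lives in a cohomology group with coefficients in $\pi_{j-1}(E_k G)$, which vanishes for $j - 1 \le k-2$, i.e.\ for $j \le k-1$; since $\dim X \le k-1$, all these obstruction groups that could be nonzero have already vanished, so the map extends over all of $X$.

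The one genuinely delicate point is the passage from a general paracompact $G$-space to the CW setting, and keeping track of the precise connectivity/dimension bookkeeping (the off-by-one between ``$(k-2)$-connected'' and ``dimension $\le k-1$'' and the shift to $g_G \le \dim X + 1$). The cleanest way to handle the non-CW $X$ is to invoke that the genus is computed by equivariant maps to $G^{*k}$ and that such maps factor through $EG$ by universality, so that the problem is reduced to the homotopy-theoretic compression statement, which is classical obstruction theory once one notes $G^{*k}$ is a free $G$-CW complex. Everything else is the routine verification that the relevant obstruction cocycles lie in groups that vanish for dimensional reasons, which is exactly why the author remarks that the lemma ``obviously follows'' from the connectivity of $G^{*k}$ and obstruction theory.
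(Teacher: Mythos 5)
Your proposal follows essentially the same route as the paper, which proves this lemma only by the remark that it follows from the $(k-2)$-connectivity of $G^{*k}$ and obstruction theory; your skeleton-by-skeleton compression with obstructions in degree $j$ valued in $\pi_{j-1}(G^{*k})=0$ for $j\le k-1$ is exactly that argument, with the correct bookkeeping for $k=\dim X+1$. Two small caveats: your opening sentence has an off-by-one slip (the target should be $G^{*(\dim X+1)}$, not $G^{*(n+1)}$ with $n=\dim X+1$; your later ``$k$'' version is the right one), and the paracompact-versus-CW issue you flag is classically settled by running obstruction theory for a fibration with $(k-2)$-connected fiber over a paracompact base of covering dimension $\le k-1$ (as in Schwarz's paper), rather than by factoring through $EG$, since the difficulty sits in the source $X$ (or $X/G$), not in the target.
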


\begin{lem}
\label{ind-conn}
For an $(n-1)$-connected free $G$-space $X$ we have
$$
g_G(X) \ge n+1.
$$
\end{lem}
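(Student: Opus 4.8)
The plan is to argue by contraposition: if $g_G(X)\le n$, I will construct a $G$-equivariant map $X\to G^{*n}$, and then derive a contradiction with the $(n-1)$-connectivity of $X$ using obstruction theory, exactly as in the classical Schwarz/Krasnosel'skii argument. The first step records the equivalence already mentioned in the excerpt: for paracompact $X$, $g_G(X)\le n$ is equivalent to the existence of a $G$-equivariant map $\phi\colon X\to G^{*n}$ (the join of $n$ copies of $G$, with the diagonal $G$-action). So it suffices to show that an $(n-1)$-connected free $G$-space admits no such equivariant map.

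Next I would set up the obstruction-theoretic comparison. The join $G^{*n}$ is a free $G$-space which is $(n-2)$-connected, and $G^{*n}\to G^{*n}/G$ is a model for the $n$-skeleton (up to the relevant dimension) of the universal bundle $EG\to BG$; more precisely, the classifying map $G^{*n}/G\to BG$ is $(n-1)$-connected, i.e. it induces isomorphisms on $\pi_i$ for $i<n-1$ and a surjection on $\pi_{n-1}$. On the other hand, since $X$ is free and $(n-1)$-connected, the quotient map $X\to X/G$ is (a portion of) a principal $G$-bundle whose classifying map $X/G\to BG$ is trivial on $\pi_i$ for $i\le n-1$ in the strong sense that $X\to X/G$ is $n$-universal from below: any map of a complex of dimension $\le n-1$ into $BG$ lifts to $X/G$, but the bundle over $X/G$ itself is "as nontrivial as possible" up to dimension $n-1$. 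Concretely, an equivariant map $X\to G^{*n}$ descends to a bundle map, hence to a factorization of the classifying map $X/G\to BG$ through $G^{*n}/G\to BG$, which up to dimension $n-1$ is the map $S^{n-1}$-bundle-level data; comparing, one gets a contradiction because the composite would have to kill a class that is nonzero by $(n-1)$-connectivity of $X$.

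It is cleaner to phrase the contradiction directly via a mapping-space obstruction. Assume $\phi\colon X\to G^{*n}$ is $G$-equivariant. Restrict along a cell structure (using paracompactness to reduce to the CW case, or work with Čech/sheaf-theoretic obstructions): since $G^{*n}$ is $(n-2)$-connected and $X$ is $(n-1)$-connected, the primary obstruction to compressing $\phi$ equivariantly into $G^{*(n-1)}$ lives in $H^{n-1}_G(X;\pi_{n-1}(G^{*n}))$, but $(n-1)$-connectivity of $X$ forces all such twisted cohomology in this degree to vanish on $X$ (here one uses that the $G$-action is free so that equivariant cohomology with local coefficients reduces to ordinary cohomology of $X/G$ with the induced local system, and then the fact that $H^i(X;-)=0$ for $1\le i\le n-1$ pushes down to the needed vanishing). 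Iterating, $\phi$ compresses equivariantly all the way to $G^{*1}=G$, i.e. $X$ admits a $G$-equivariant map to the discrete space $G$ — but that is impossible for connected $X$ (any $G$-map to $G$ is constant on components, contradicting equivariance since $G$ acts freely and transitively on $G$ while acting trivially on $\pi_0(X)=\pt$ for a connected space, $|G|>1$). The degenerate case $|G|=1$ is vacuous. This contradiction proves $g_G(X)\ge n+1$.

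The main obstacle is the bookkeeping in the obstruction-theory step: one must be careful that the relevant obstruction groups are twisted equivariant cohomology groups $H^i_G(X;\pi_i(G^{*n}))$ and verify they vanish in the range $1\le i\le n-1$ purely from the hypothesis that $X$ is $(n-1)$-connected — this uses that for a free action $H^i_G(X;M)\cong H^i(X/G;\underline M)$ and that the local coefficient system, pulled back to the universal cover which is $X$ itself in low degrees, gives trivial cohomology. An alternative, slicker route that sidesteps the twisted coefficients is to invoke the ideal-valued (cohomological) index: the Fadell–Husseini or Volovikov index of an $(n-1)$-connected free $G$-space contains the full $n$-th power of the relevant class, while any space with $g_G\le n$ has index with that power vanishing; but since the excerpt only licenses the join-model equivalence, I would present the obstruction-theoretic version above and merely remark on the index-theoretic shortcut.
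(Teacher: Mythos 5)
Your key vanishing claim is false, and it is the heart of your argument. For a free action one indeed has $H^i_G(X;M)\cong H^i(X/G;\underline M)$, but the $(n-1)$-connectivity of $X$ does not make these groups vanish --- quite the opposite: when $X$ is highly connected, the quotient $X/G$ has the cohomology of the classifying space $BG$ through low degrees, and for a finite nontrivial group this is typically nonzero. The simplest counterexample: $G=Z_2$, $X=S^\infty$ (contractible, hence $(n-1)$-connected for every $n$), $X/G=\mathbb{R}P^\infty$, and $H^i(\mathbb{R}P^\infty;Z_2)\ne 0$ for all $i$. So the obstruction to compressing $\phi$ equivariantly from $G^{*n}$ into $G^{*(n-1)}$ does not die for the reason you give, and the iterated compression down to $G^{*1}=G$ cannot be carried out; if it could, it would prove the absurd statement that every $(n-1)$-connected free $G$-space has genus $1$. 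There is also a secondary defect: compressing a map defined on a possibly infinite-dimensional $X$ produces obstructions in all degrees up to $\dim X$, not just a primary one, so even a correct vanishing in degree $n-1$ would not finish the induction.

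The lemma is quoted by the paper from the literature (Krasnosel'skii, Yang, Schwarz) without proof, and the standard argument runs the obstruction theory in the opposite direction from yours: since $G^{*(n+1)}$ is a free $G$-complex of dimension $n$ and $X$ is $(n-1)$-connected, equivariant obstruction theory produces a $G$-map $G^{*(n+1)}\to X$ (the obstructions take coefficients in $\pi_{i-1}(X)$ for cells of dimension $i\le n$, and these groups really do vanish). Monotonicity of the genus then gives $g_G(X)\ge g_G\bigl(G^{*(n+1)}\bigr)$, and the genuinely nontrivial input is $g_G\bigl(G^{*(n+1)}\bigr)=n+1$, equivalently the Borsuk--Ulam/Dold-type statement that there is no equivariant map $G^{*(n+1)}\to G^{*n}$. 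That statement cannot be obtained by naive compression; it needs a cohomological ingredient such as the nontriviality of $H^*(BG)$ in arbitrarily high degrees, an ideal-valued index, or Smith theory. Your closing remark about the Fadell--Husseini/Volovikov index points at the right kind of tool, but as written the proposal does not prove the lemma.
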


The Borsuk-Ulam theorem~\cite{bor1933} (in the version of Lusternik-Schnirelmann~\cite{ls1934}) follows from the above two lemmas:

\begin{lem}
\label{sphere-index} If $G$ acts freely on $S^d$, then
$$
g_G(S^d) = d+1.
$$
\end{lem}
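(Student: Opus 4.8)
The plan is to sandwich $g_G(S^d)$ between the two bounds just established, so that no genuinely new work is needed beyond invoking Lemmas~\ref{ind-dim} and~\ref{ind-conn} with the correct numerical inputs.

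First I would record the upper bound. Since $S^d$ is a compact metrizable space it is paracompact, and its covering dimension is $d$. Lemma~\ref{ind-dim} then gives immediately
$$
g_G(S^d) \le \dim S^d + 1 = d+1.
$$

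Next I would treat the lower bound. The sphere $S^d$ is $(d-1)$-connected: it is nonempty, and for $d\ge 1$ it is path-connected with $\pi_k(S^d)=0$ for all $0<k<d$ (and for $d=0$ the $(-1)$-connectedness is just nonemptiness). Applying Lemma~\ref{ind-conn} with $n=d$ to the free $G$-space $S^d$ yields
$$
g_G(S^d) \ge d+1.
$$
Combining the two inequalities gives $g_G(S^d)=d+1$, which is the assertion.

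I do not expect a real obstacle here; the only point deserving a sentence of care is the boundary case $d=0$, where $S^0$ is a two-point set with the free $\mathbb Z_2$-action, $\dim S^0=0$, and $S^0$ is $(-1)$-connected, so both bounds still read $g_{\mathbb Z_2}(S^0)=1$. For $d\ge 1$ the standard vanishing of the low-dimensional homotopy groups of $S^d$ is all that the lower bound requires, and freeness of the action is part of the hypothesis, so Lemma~\ref{ind-conn} applies without further ado.
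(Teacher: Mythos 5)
Your proof is correct and is exactly the paper's argument: the paper derives Lemma~\ref{sphere-index} by combining the dimension upper bound of Lemma~\ref{ind-dim} with the connectivity lower bound of Lemma~\ref{ind-conn} applied to the $(d-1)$-connected sphere. Your extra remark on the case $d=0$ is harmless and does not change the argument.
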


Let us state another property of the genus, that is not listed in the properties of a generalized relative category.

\begin{thm}
\label{ls-gen} 
Let a free paracompact $G$-space $Y$ be covered by $G$-invariant open subsets $\{Y_i\}_{i=1}^l$. Then there exists $x\in Y$ such that
$$
\sum_{Y_i\ni x} g_G(Y_i) \ge g_G(Y).
$$
\end{thm}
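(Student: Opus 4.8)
The plan is to prove the equivalent statement that $g_G(Y)\le m$, where $m=\max_{x\in Y}\sum_{Y_i\ni x}g_G(Y_i)$; since $Y$ is nonempty this maximum is attained at some $x$, which then satisfies $\sum_{Y_i\ni x}g_G(Y_i)=m\ge g_G(Y)$, as required. We may assume each $k_i:=g_G(Y_i)$ is finite, for if some $g_G(Y_i)=\infty$ we simply take $x\in Y_i$ and the inequality holds trivially (and if $Y=\varnothing$ there is nothing to prove). Then, by the equivalence recalled in the introduction between $g_G(Y_i)\le k_i$ and the existence of an equivariant map, we fix $G$-equivariant maps $\phi_i\colon Y_i\to G^{*k_i}$.

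First I would choose a $G$-invariant partition of unity $\{\rho_i\}_{i=1}^l$ subordinate to the cover, with $\operatorname{supp}\rho_i\subseteq Y_i$; this exists because $Y$ is paracompact (take any subordinate partition of unity and average it over the finite group $G$, which preserves subordination since the $Y_i$ are $G$-invariant). Put $N=\sum_{i=1}^l k_i$ and regard $G^{*N}$ as the simplicial complex with $N$ $G$-labelled ``slots'' grouped into consecutive blocks of sizes $k_1,\dots,k_l$. I would then define $\Phi\colon Y\to G^{*N}$ by letting the $i$-th block of $\Phi(y)$ be $\rho_i(y)\cdot\phi_i(y)$, i.e.\ rescale all barycentric coordinates of $\phi_i(y)\in G^{*k_i}$ by the factor $\rho_i(y)$ while keeping the $G$-labels. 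The total of the barycentric coordinates is $\sum_i\rho_i(y)=1$, the map $\Phi$ is $G$-equivariant (the $\rho_i$ are invariant, the $\phi_i$ equivariant), and it is continuous: on a neighbourhood of $\operatorname{supp}\rho_i$ the map $\phi_i$ is defined and continuous, while outside $\{\rho_i>0\}$ the $i$-th block carries zero weight, so its labels are immaterial — this is the usual join-with-partition-of-unity argument.

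The decisive observation is a constraint on the image of $\Phi$. At a point $y$, the $i$-th block of $\Phi(y)$ has all coordinates zero unless $\rho_i(y)>0$, hence unless $y\in Y_i$; therefore the number of nonzero barycentric coordinates of $\Phi(y)$ is at most $\sum_{Y_i\ni y}k_i\le m$. Thus $\Phi$ takes values in the subcomplex $Z\subseteq G^{*N}$ consisting of those points with at most $m$ nonzero barycentric coordinates. This $Z$ is a free $G$-space (the action permutes labels and preserves the set of active slots) and a finite simplicial complex of dimension $m-1$, so Lemma~\ref{ind-dim} gives $g_G(Z)\le\dim Z+1=m$. Since $\Phi\colon Y\to Z$ is a $G$-equivariant map, monotonicity of the genus yields $g_G(Y)\le g_G(Z)\le m$, completing the argument.

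The step I expect to need the most care is the passage through the subcomplex $Z$: correctly reading off the bound $m$ from the image of $\Phi$, and verifying that $Z$ is genuinely an $(m-1)$-dimensional free $G$-space so that Lemma~\ref{ind-dim} applies. The continuity of $\Phi$ near the frontier of each $\operatorname{supp}\rho_i$ also needs the usual (routine) care with the join topology, but everything else is bookkeeping with partitions of unity and joins.
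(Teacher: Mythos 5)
Your proof is correct and follows essentially the same route as the paper: join the equivariant maps $\phi_i$ via a $G$-invariant partition of unity and bound the genus of the image by its dimension (Lemma~\ref{ind-dim}) plus monotonicity. The only difference is presentational — you argue directly that the image lies in the $(m-1)$-dimensional subcomplex, whereas the paper phrases it as a contradiction with the $\left(g_G(Y)-2\right)$-skeleton — and you supply some details (averaging the partition of unity, continuity in the join, freeness of the subcomplex) that the paper leaves implicit.
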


Let us give an example of applying this theorem to describing the critical values of a smooth function. If $Y$ is a compact closed manifold with free action of $G$, $f$ is a smooth function on $X=Y/G$, $\xi$ is a gradient-like vector field for $f$, then either there are infinitely many critical points of $f$, or for every $\varepsilon>0$ there exists an integral curve of $\xi$, passing through $\varepsilon$-neighborhoods of at least $g_G(X)$ critical points of $f$.

\begin{proof}[Proof of theorem~\ref{ls-gen}]
Put $g_G(Y_i) = k_i$, we may assume these numbers to be finite, otherwise the statement trivially holds. Consider the equivariant maps $f_i : Y_i\to G^{*k_i}$, that exist by the definition of the genus (for paracompact spaces), and consider the $G$-invariant partition of unity $\{\rho_i\}_{i=1}^l$, subordinated to $\{Y_i\}$.

Define the map
$$
f : x \mapsto \rho_1(x)f_1(x)\oplus\dots\oplus\rho_l(x)f_l(x),
$$
which maps $X$ to the join $G^{*(k_1+\dots+k_l)}$. If the claim is not true, the image of this map is in the  $\left(g_G(X) - 2\right)$-dimensional skeleton of $G^{*(k_1+\dots+k_l)}$, which contradicts the monotonicity of the genus and Lemma~\ref{ind-dim}.
\end{proof}

\section{Properties of the cohomology length}
\label{length-sec}

The cohomology length is widely used to estimate the Lyusternik-Schnirelmann category. The following lemma is from~\cite{fe1935}, see also the review~\cite{jam1978}. Here $A$ is the ring of coefficients.

\begin{lem}
\label{hom-prod} Let a space $X$ be covered by open sets $U_1, U_2,\ldots, U_m$, and let the elements $a_1,
a_2,\ldots, a_m\in H^*(X, A)$ be given. If for any $i=1,\ldots, m$ the image of $a_i$ in $H^*(U_i, A)$ is zero, then the product $a_1a_2\cdots a_m = 0$ in $H^*(X, A)$.
\end{lem}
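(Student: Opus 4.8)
The plan is to run the classical lifting-to-relative-cohomology argument. For each $i$, consider the long exact cohomology sequence of the pair $(X, U_i)$ with coefficients in $A$:
$$
\cdots \to H^*(X, U_i; A) \xrightarrow{j_i^*} H^*(X; A) \xrightarrow{\rho_i^*} H^*(U_i; A) \to \cdots
$$
By hypothesis $\rho_i^*(a_i) = 0$, so exactness yields a relative class $\bar a_i \in H^*(X, U_i; A)$ with $j_i^*(\bar a_i) = a_i$ and $\dim \bar a_i = \dim a_i$.

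Next I would invoke the relative cup product. Since the sets $U_i$ are open, the family $\{U_i\}$ is excisive, and one has a natural, associative cup-product pairing
$$
H^*(X, U_1; A) \otimes \cdots \otimes H^*(X, U_m; A) \to H^*\Bigl(X, \textstyle\bigcup_{i=1}^m U_i; A\Bigr).
$$
Form $\bar a := \bar a_1 \bar a_2 \cdots \bar a_m \in H^*(X, \bigcup_i U_i; A)$. But $\bigcup_{i=1}^m U_i = X$ by assumption, so this group is $H^*(X, X; A) = 0$, whence $\bar a = 0$. Finally, by naturality of the cup product with respect to the inclusions of pairs $(X, \emptyset) \hookrightarrow (X, U_i)$ and $(X, \emptyset) \hookrightarrow (X, \bigcup_i U_i)$, the image of $\bar a$ under the map $H^*(X, X; A) \to H^*(X; A)$ is exactly $a_1 a_2 \cdots a_m$. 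Since $\bar a = 0$, we conclude $a_1 a_2 \cdots a_m = 0$.

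The only delicate point is the construction of the iterated relative product landing in $H^*(X, \bigcup_i U_i; A)$: this uses both that the $U_i$ are open (so excision applies and the successive relative pairings compose) and, crucially, that the union of the $U_i$ is all of $X$ rather than merely containing a given subspace. For open covers these facts are standard (indeed this is essentially the statement of the cited reference), so no genuine obstacle arises; the proof is a bookkeeping assembly of the exact sequence, the relative product, and naturality.
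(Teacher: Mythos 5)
Your proof is correct: lifting each $a_i$ to $H^*(X,U_i;A)$ via the exact sequence of the pair and multiplying with the relative cup product into $H^*(X,\bigcup_i U_i;A)=H^*(X,X;A)=0$ is the standard argument, and you rightly flag the one delicate point, namely that openness of the $U_i$ makes the relevant couples excisive so the iterated relative product is defined and natural. The paper itself offers no proof of this lemma (it is quoted from \cite{fe1935}, see also \cite{jam1978}), so there is nothing to contrast with; your write-up supplies exactly the classical proof the citation stands for.
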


This lemma imply Property~2 of a generalized relative category for $\hl_X U+1$. Indeed, if a product $N=\sum_{i=1}^n \hl_X U_i + n$ of classes of positive dimension $u_1,\ldots,u_N\in H^*(X, A)$ is nonzero on $\bigcup_{i=1}^n U_i$, then this product can be partitioned into $n$ segments of length $\hl_X U_i + 1$ respectively. By Lemma~\ref{hom-prod} one of the segments is nonzero on its respective $U_i$, which contradicts the definition of $\hl_X U_i$.

The Properties~1 and 3 are obvious for the cohomology length.

\section{Some corollaries}
\label{corollaries-sec}

Let us give a corollary of Theorem~\ref{fun-level}. This corollary can also be deduced from the version of Theorem~\ref{fun-level} in~\cite{yang1955II}.

\begin{cor}[A generalized Borsuk-Ulam theorem for functions]
Let $k, l, n$ be positive integers such that $k(l+1) \le n$. Then for any $l$ continuous even functions $(f_1, \ldots, f_l)$ on $S^n$ there exist numbers $(c_1, \ldots, c_l)$ such that for any $k$ continuous odd functions $(g_1, \ldots, g_k)$ the system of equations
\begin{eqnarray*}
f_1(x) = c_1,\ f_2(x) = c_2,\ldots, f_l(x) = c_l\\
g_1(x) = 0,\ g_2(x) = 0,\ldots, g_k(x) = 0
\end{eqnarray*}
has a solution.
\end{cor}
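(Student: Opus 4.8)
The plan is to set up the map whose level sets we want to control and then apply Theorem~\ref{fun-level} with $\kappa$ the $Z_2$-genus (equivalently the sectional category of the double cover $S^n\to \mathbb{RP}^n$). First I would package the even functions: an even function on $S^n$ is the same as a continuous function on $\mathbb{RP}^n$, so $F=(f_1,\dots,f_l)$ descends to a continuous map $\bar F:\mathbb{RP}^n\to\mathbb{R}^l$. Here $Y=\mathbb{R}^l$ is a (noncompact) metric space, but $X=\mathbb{RP}^n$ is compact, so the variant of Theorem~\ref{fun-level} mentioned right after its statement applies: the image is compact. We have $\kappa(\mathbb{RP}^n)=g_{Z_2}(S^n)=n+1$ by Lemma~\ref{sphere-index}, and $\mathbb{R}^l$ has covering dimension $d=l$. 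Since by hypothesis $k(l+1)\le n$, i.e. $n+1 > k(l+1) - 1$... more precisely we want $\kappa(X) > k\,(d+1)$, which reads $n+1 > k(l+1)$; this follows from $k(l+1)\le n$. Thus Theorem~\ref{fun-level} yields a point $c=(c_1,\dots,c_l)\in\mathbb{R}^l$ such that for every neighborhood $U\ni c$ we have $\kappa(\bar F^{-1}(U)) > k$.

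Next I would translate this genus bound back downstairs into solvability of the odd equations. Let $A\subseteq S^n$ be the preimage of the common level set $\{F=c\}$, a closed $Z_2$-invariant set, and let $g=(g_1,\dots,g_k):S^n\to\mathbb{R}^k$ be the odd map assembled from the $g_i$; it induces an equivariant map $g:S^n\to \mathbb{R}^k$ where $Z_2$ acts antipodally on the sphere and by $-1$ on $\mathbb{R}^k$. Suppose, for contradiction, that the system has no solution on $A$, i.e. $g(x)\neq 0$ for all $x\in A$. Since $S^n$ is compact and $g$ is continuous, $\|g\|$ is bounded below by some $\delta>0$ on $A$, hence $g(x)\neq 0$ on some open $Z_2$-invariant neighborhood $W\supseteq A$; this $W$ is $\bar F^{-1}(U)$ (pulled up to the sphere) for a suitable neighborhood $U\ni c$. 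On $W$ the normalized map $x\mapsto g(x)/\|g(x)\|$ is an equivariant map $W\to S^{k-1}$, where $S^{k-1}$ carries the antipodal action. But $S^{k-1}$ with the free antipodal $Z_2$-action has genus $k$ (again Lemma~\ref{sphere-index}), so by monotonicity of the genus $\kappa(W)=g_{Z_2}(W)\le g_{Z_2}(S^{k-1})=k$, contradicting $\kappa(\bar F^{-1}(U)) > k$ from the previous paragraph.

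The step that needs the most care is the bookkeeping of the inequality and of which space plays the role of $X$, $Y$: one must use the compact-$X$ version of Theorem~\ref{fun-level} (since $\mathbb{R}^l$ is noncompact), confirm $d=\dim\mathbb{R}^l=l$, and check that $k(l+1)\le n$ is exactly what makes $\kappa(X)=n+1 > k(d+1)=k(l+1)$. I would also remark that the choice of $c$ is uniform over \emph{all} odd $k$-tuples $g$: the point $c$ depends only on $F$ (through Theorem~\ref{fun-level}), and then the argument against any particular $g$ produces the contradiction, so a single $c$ works simultaneously — matching the order of quantifiers in the statement. Finally, the passage from "no zero of $g$ on the closed set $A$" to "no zero on an open neighborhood $\bar F^{-1}(U)$" uses compactness of $S^n$ and continuity of $f_1,\dots,f_l$ so that small neighborhoods of $c$ in $\mathbb{R}^l$ pull back into small neighborhoods of $A$ in $S^n$; this is the only genuinely topological (as opposed to formal) point, and it is routine.
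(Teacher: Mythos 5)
Your proposal is correct and follows essentially the same route as the paper: apply Theorem~\ref{fun-level} with $X=S^n/Z_2$, $\kappa$ the $Z_2$-genus of preimages, $d=l$, so that $k(l+1)\le n$ gives $\kappa(X)=n+1>k(l+1)$, and then rule out a common zero-free odd $k$-tuple via the equivariant map to $S^{k-1}$, Lemma~\ref{sphere-index} and monotonicity of the genus. The only difference is cosmetic: the paper invokes its remark after Theorem~\ref{fun-level} to transfer the bound to the closed level set, while you keep the bound $\kappa(f^{-1}(U))>k$ on neighborhoods and use compactness of $S^n$ to fit $f^{-1}(U)$ inside the zero-free region, which is an equally valid (and self-contained) way to finish.
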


\begin{proof}
By Lemma~\ref{sphere-index}, $Z_2$-genus of $S^n$ under the involution $x\mapsto -x$ equals $n+1$. Let us apply Theorem~\ref{fun-level} to the map $f : S^n/Z_2\to \mathbb R^l$, given by the functions $(f_1, \ldots, f_l)$. We obtain a set of numbers $(c_1, \ldots, c_l)$ such that the subset of $S^n$ (see the comment after Theorem~\ref{fun-level})
$$
Y = \{x\in S^n : f_1(x) = c_1,\ f_2(x) = c_2,\ldots, f_l(x) = c_l\}
$$
has genus at least $k$. Similarly to the proof of the original Borsuk-Ulam theorem, Lemma~\ref{sphere-index} and the monotonicity of the genus imply, that any set of continuous odd functions $(g_1, \ldots, g_k)$ has a common zero on $Y$.
\end{proof}

If we take the functions $(g_1, \ldots, g_k)$ to be linear, we obtain the following result.

\begin{cor}
Let $k, l, n$ be positive integers such that $k(l+1) \le n$. Then for any $l$ continuous even functions $(f_1, \ldots, f_l)$ on $S^n$ there exist numbers $(c_1, \ldots, c_l)$ such that the set
$$
Y = \{x \in S^n : f_1(x) = c_1,\ f_2(x) = c_2,\ldots, f_l(x) = c_l\}
$$
intersects with any linear subspace of dimension $n+1-k$ in $\mathbb R^{n+1}$.
\end{cor}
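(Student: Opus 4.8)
The plan is to deduce this directly from the preceding corollary (the generalized Borsuk--Ulam theorem for functions) by specializing the odd functions $(g_1,\ldots,g_k)$ to be linear. The only geometric observation needed is that every linear subspace of $\mathbb R^{n+1}$ of dimension $n+1-k$ is cut out by $k$ linear equations: if $L\subseteq\mathbb R^{n+1}$ is such a subspace, then $L=\ker(\mathbb R^{n+1}\to\mathbb R^{n+1}/L)$, and choosing an identification $\mathbb R^{n+1}/L\cong\mathbb R^k$ gives linearly independent linear functionals $g_1,\ldots,g_k$ on $\mathbb R^{n+1}$ with $L=\{x:g_1(x)=\dots=g_k(x)=0\}$. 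Restricted to $S^n$, each $g_i$ is a continuous odd function.

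First I would invoke the preceding corollary for the given even functions $(f_1,\ldots,f_l)$ on $S^n$; since the hypothesis $k(l+1)\le n$ is exactly the one required there, it produces numbers $(c_1,\ldots,c_l)$, depending only on the $f_i$, such that for \emph{every} choice of $k$ continuous odd functions on $S^n$ the corresponding system has a solution in
$$
Y=\{x\in S^n : f_1(x)=c_1,\ \ldots,\ f_l(x)=c_l\}.
$$
Then, given an arbitrary $(n+1-k)$-dimensional linear subspace $L$, I would write $L$ via linear functionals $g_1,\ldots,g_k$ as above, view them as odd functions on $S^n$, and apply the conclusion of the preceding corollary: there is $x\in Y$ with $g_1(x)=\dots=g_k(x)=0$, i.e. $x\in Y\cap L$. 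Note that $n+1-k\ge 1$ automatically (from $k(l+1)\le n$ and $l\ge 1$ one gets $2k\le n$, so $k<n+1$), so the statement is non-vacuous.

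The step I would treat most carefully is the order of quantifiers rather than any genuine difficulty: the numbers $c_i$ must be chosen once and for all, before $L$ is given, and then work for every $L$ simultaneously. This is exactly the form in which the preceding corollary is stated (the $c_i$ are produced independently of $(g_1,\ldots,g_k)$), so the same $Y$ meets all $(n+1-k)$-dimensional linear subspaces at once, which is the assertion. Apart from this bookkeeping there is no real obstacle.
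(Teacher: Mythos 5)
Your proposal is correct and matches the paper's route exactly: the paper obtains this corollary from the preceding generalized Borsuk--Ulam theorem simply by taking the odd functions $(g_1,\ldots,g_k)$ to be the linear functionals cutting out the subspace $L$. Your careful handling of the quantifier order (the $c_i$ chosen once, before $L$) is precisely what makes that specialization legitimate, so nothing is missing.
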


\end{document}